\newtheorem{thm}{Theorem}
\newtheorem{lemma}[thm]{Lemma}
\newtheorem*{theorem*}{Theorem}
\DeclareMathOperator{\F}{\mathbb{F}}
\begin{document}
\baselineskip=16.3pt
\parskip=14pt

\begin{center}
\section*{On Galois groups of linearized polynomials related to the general linear group of prime degree}

{\large 
Rod Gow and Gary McGuire
 \\ { \ }\\
School of Mathematics and Statistics\\
University College Dublin\\
Ireland}
\end{center}

 \subsection*{Abstract}
 
Let $L(x)$ be any $q$-linearized polynomial with coefficients in $\F_q$, of degree $q^n$.
We consider the Galois group of $L(x)+tx$ over $\F_q(t)$, where $t$ is transcendental
over $\F_q$.
We prove that when $n$ is a prime, the Galois group is always $GL(n,q)$,
except when $L(x)=x^{q^n}$.
 Equivalently, we prove that the arithmetic monodromy group of $L(x)/x$ is  $GL(n,q)$.
 
\section{Introduction}
  
Let $p$ be a prime number, and let $q$ be a positive power of $p$.
Let $\F_q$ denote the finite field with $q$ elements.
Let $F$ be a field of characteristic $p$,
and assume that $F$ contains $\F_q$.
A $q$-linearized polynomial over $F$  is a polynomial of the form 
\begin{equation}\label{lin1}
L(x)=a_0x+a_1x^q+a_2x^{q^2}+\cdots+a_n x^{q^n} \in F[x].
\end{equation}
If $a_n\not=0$ we say that $n$ is the $q$-degree of $f$.
We will usually say $q$-polynomial instead of $q$-linearized polynomial.

The set of roots of a $q$-polynomial $L$ forms an $\F_q$-vector space,
which is contained in a splitting field of $L$. We make this statement more precise in the following lemma, which is straightforward to prove (see \cite{SL} for example).

\begin{lemma} \label{vector_space}
Let $F$ be a field of prime characteristic $p$ that contains $\F_q$. Let $L$ be a $q$-polynomial of $q$-degree $n$ in $F[x]$, with
\[
L=a_0x+a_1x^q+a_2x^{q^2}+\cdots+a_n x^{q^n}.
\]
Let $E$ be a splitting field for $L$ over $F$ and let $V$ be the set of roots of $L$ in $E$. 
Let $\Gamma$ be the Galois group of $E$
over $F$. Suppose that $a_0\neq 0$. 
Then $V$ is an $\F_q$-vector space of dimension $n$ and $\Gamma$ is naturally a subgroup
of $GL(n,q)$.
\end{lemma}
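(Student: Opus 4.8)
The plan is to establish the four assertions in turn: $V$ is closed under addition, $V$ is closed under multiplication by scalars from $\F_q$, $V$ has exactly $q^n$ elements, and $\Gamma$ embeds into the general linear group of $V$.

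First I would use the fact that a $q$-polynomial is an additive map on $E$: since $q$ is a power of the characteristic $p$, each map $x\mapsto x^{q^i}$ is additive, so $L(u+v)=L(u)+L(v)$ for all $u,v\in E$, and hence $V=\ker(L|_E)$ is an additive subgroup of $E$. For the $\F_q$-structure the key observation is that $c^{q^i}=c$ for every $c\in\F_q$, so
\[
L(cv)=\sum_{i=0}^n a_i c^{q^i} v^{q^i}=c\sum_{i=0}^n a_i v^{q^i}=cL(v);
\]
thus $L$ acts $\F_q$-linearly on $E$ and $V$ is an $\F_q$-subspace.

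To compute $\dim_{\F_q}V$ I would show that $L$ is separable. The formal derivative of $x^{q^i}$ vanishes for $i\ge 1$ in characteristic $p$, so $L'(x)=a_0$, which is a nonzero constant by hypothesis. Hence $\gcd(L,L')=1$, the polynomial $L$ has no repeated roots, and therefore $|V|=\deg L=q^n$, giving $\dim_{\F_q}V=n$. This is the one place where the assumption $a_0\neq 0$ is used, and it is essentially the only step requiring any care.

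Finally, for the Galois action: each $\sigma\in\Gamma$ fixes the coefficients $a_i\in F$, so it permutes the roots of $L$ and restricts to a bijection $V\to V$; and since $\sigma$ is a ring automorphism fixing $\F_q\subseteq F$ pointwise, this restriction is $\F_q$-linear. Thus $\sigma\mapsto\sigma|_V$ is a group homomorphism $\Gamma\to GL(V)\cong GL(n,q)$, and it is injective because $E$ is generated over $F$ by the roots of $L$, so any $\sigma$ fixing $V$ pointwise fixes all of $E$. Hence $\Gamma$ is naturally a subgroup of $GL(n,q)$. I do not expect a genuine obstacle: the lemma is elementary, and the only substantive input is the separability computation above.
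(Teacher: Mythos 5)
Your proof is correct and complete: the additivity and $\F_q$-linearity of $L$, the separability argument from $L'=a_0\neq 0$ giving $|V|=q^n$, and the faithful $\F_q$-linear action of $\Gamma$ on $V$ (injective since $E$ is generated over $F$ by the roots) are exactly the standard argument that the paper describes as ``straightforward'' and defers to the reference \cite{SL} rather than writing out. There is nothing to add or correct.
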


In this statement we are denoting the general linear group of invertible $n\times n$ matrices over $\F_q$ by $GL(n,q)$.
The affine general linear group, denoted $AGL(n,q)$, is the semidirect product of
$GL(n,q)$ with $\F_q^n$, under the natural action.
The group $\Gamma L (n,q)$ is the semidirect
product of $GL(n,q)$ with the automorphism group of  $\F_{q}$, where the
automorphisms act entrywise on an element of $GL(n,q)$.
%We call $V$ the ($\F_q$-vector) space of roots of $L$.

In this paper we will take $F$ to be $\F_q(t)$, where $t$ is transcendental over $\F_q$.
We consider polynomials of the form $L(x)+tx \in \F_q(t)[x]$ where $L(x)$ has coefficients in $\F_q$.
We will prove the following theorem. 

\begin{thm}\label{main}
Let $q$ be odd, and let $n$ be an odd prime. 
Let $L(x)\in \F_q[x]$ be a monic $q$-polynomial of $q$-degree $n$.
Then the Galois group of $L(x)+tx \in \F_q(t)[x]$ is isomorphic to $GL(n,q)$, unless $L(x)=x^{q^n}$,
in which case the Galois group is isomorphic to $\Gamma L (1,q^n)$.
\end{thm}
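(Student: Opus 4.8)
The plan is to locate the Galois group $G$ of $L(x)+tx$ over $\F_q(t)$ inside $GL(n,q)$ (it acts faithfully on the space $V$ of roots by Lemma~\ref{vector_space}, since the coefficient of $x$ in $L(x)+tx$ is $t\neq 0$) and to show it is either all of $GL(n,q)$ or, exceptionally, $\Gamma L(1,q^n)$. First, replacing $L(x)$ by $L(x)-a_0x$ and $t$ by $t+a_0$, where $a_0$ is the coefficient of $x$ in $L$ — which leaves $\F_q(t)$, and hence $G$, unchanged — I may assume $a_0=0$; then $L=L_1^{\,q}$ for the monic $q$-polynomial $L_1$ of $q$-degree $n-1$ got by replacing each exponent $q^i$ by $q^{i-1}$, and under this normalization the exceptional polynomial is exactly $L=x^{q^n}$. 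Writing $p(x)=\big(L(x)+tx\big)/x=L(x)/x+t$, the polynomial $p$ has degree $1$ in $t$, so it is irreducible over $\F_q(t)$; hence $G$ acts transitively on the $q^n-1$ nonzero roots, i.e. $G$ is a transitive linear group of prime degree $n$ over $\F_q$.

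The first substantive step is to compute $\det(G)\subseteq\F_q^{\ast}$. Fix an $\F_q$-basis $v_1,\dots,v_n$ of $V$ and let $\mu=\det\!\big(v_i^{\,q^{j-1}}\big)_{1\le i,j\le n}\in E$ be the corresponding Moore determinant; it is nonzero, and since an $\F_q$-linear substitution on the $v_i$ multiplies $\mu$ by the determinant of the substitution matrix, one has $\sigma(\mu)=\det(\sigma)\,\mu$ for every $\sigma\in G$. From the product formula $\mu=\prod_{i=1}^{n}\prod_{\lambda\in\langle v_1,\dots,v_{i-1}\rangle}(v_i-\lambda)$, whose factors form a system of one representative per line of $V$, one gets $\mu^{\,q-1}=(-1)^{(q^n-1)/(q-1)}\prod_{v\in V\setminus\{0\}}v$, and the latter product is the constant term of $p$, namely $t$; since $(q^n-1)/(q-1)$ is odd we obtain $\mu^{\,q-1}=-t$. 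As $-t$ is not an $\ell$-th power in $\F_q(t)$ for any prime $\ell\mid q-1$ (compare degrees), $x^{q-1}+t$ is irreducible, so $\F_q(t)(\mu)=E^{\ker(\det\mid_G)}$ has degree $q-1$ over $\F_q(t)$, forcing $\det(G)=\F_q^{\ast}$. In particular $G=GL(n,q)$ as soon as $SL(n,q)\le G$.

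Next I invoke Hering's classification of finite transitive linear groups. Because $n$ is an odd prime, the symplectic, $G_2$, and sporadic possibilities (which occur only in dimensions $2,4,6$) do not arise, so either $SL(n,q)\trianglelefteq G$ — and then $G=GL(n,q)$ by the previous paragraph — or $G\le\Gamma L(1,q^n)$. It remains to show the second alternative forces $L=x^{q^n}$. The key ramification input is that, because $a_0=0$, the degree‑$(q^n-1)$ map $f\colon x\mapsto -L(x)/x=-L_1(x)^q/x$ is a polynomial whose only branch points are $t=\infty$ (totally, tamely ramified of index $q^n-1$) and $t=0$, over which, if $L\neq x^{q^n}$ (equivalently $L_1\neq x^{q^{n-1}}$, so $L_1$ has a nonzero root $w$), the point $x=w$ has ramification index a positive power of $q$ — in particular wild ramification. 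Now assume $G\le\Gamma L(1,q^n)$. Transitivity forces $G_0:=G\cap GL(1,q^n)$ to have order $q^n-1$ or $(q^n-1)/n$; in the latter case $G$ acts regularly on the roots, so $[E:\F_q(t)]=q^n-1$ is prime to $p$ and $E/\F_q(t)$ is tame everywhere, contradicting the wild ramification over $t=0$. Hence $GL(1,q^n)\le G$, and since $G/GL(1,q^n)$ embeds in the simple group $C_n$, $G$ is either $GL(1,q^n)$ — again impossible by tameness — or $\Gamma L(1,q^n)$.

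Finally suppose $G=\Gamma L(1,q^n)$. The tame inertia at $\infty$ is cyclic of order divisible by $q^n-1$, and the only such subgroup of $\Gamma L(1,q^n)$ is $GL(1,q^n)$, so the geometric monodromy group (the Galois group over $\overline{\F}_q(t)$) is normal in $G$, contains a Singer cycle, and therefore equals $GL(1,q^n)$ or $\Gamma L(1,q^n)$. If it equals $GL(1,q^n)$, the geometric Galois closure of $f$ is cyclic and $f$ is already geometrically Galois with group $C_{q^n-1}$; a polynomial of degree $q^n-1$ with $\mathbb P^1$ source and cyclic monodromy is of the form $a(x-b)^{q^n-1}+c$, and since $L(x)/x$ is monic with zero constant term this forces $L(x)/x=x^{q^n-1}$, i.e. $L=x^{q^n}$. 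If instead the geometric monodromy is all of $\Gamma L(1,q^n)$, then, the cover being wild over $t=0$, one needs $p\mid n(q^n-1)$, hence $p=n$; this remaining possibility is excluded by a finer analysis of the $C_p$-subcover fixed by $GL(1,q^n)$ — an Artin–Schreier extension of $\F_q(t)$ unramified away from $t=0$ — together with a Hilbert~90 / norm computation for $E$ over that subfield, which pins the roots down to a one‑dimensional $\F_{q^n}$-subspace of $E$. Carrying out this last elimination of $G\le\Gamma L(1,q^n)$ when $L\neq x^{q^n}$, and in particular the $p=n$ case, is the main obstacle; the rest is classification input plus routine ramification bookkeeping.
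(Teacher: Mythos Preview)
Your route diverges substantially from the paper's. Both arguments reach the same dichotomy---either $G=GL(n,q)$ or $G\le \Gamma L(1,q^n)=N(C)$---but by different means: you use transitivity, Hering's classification, and a Moore--determinant computation of $\det(G)$, whereas the paper exhibits a Singer cycle via tame inertia at $\infty$ (Lemma~\ref{n_cycle}) and invokes Kantor's Theorem~\ref{CK}, which already yields $G=GL(n,q)$ in the first branch without any separate determinant calculation. The real divergence is in eliminating $G\le\Gamma L(1,q^n)$ when $L\neq x^{q^n}$. The paper specializes $t\mapsto -L(\alpha)/\alpha$ for $\alpha$ in a large finite field, uses Dedekind (Theorem~\ref{ded}) together with Lemma~\ref{action_of_normalizer} to force all irreducible factor degrees to be odd, deduces that the discriminant $\pm L(\alpha)/\alpha$ is always a square, and then applies the G\"olo\u{g}lu--McGuire Theorem~\ref{gmg} to conclude $L(x)=x^{q^n}$. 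This argument is insensitive to whether $p=n$.

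Your wild--ramification argument, by contrast, has a genuine gap that you yourself flag. When $L\neq x^{q^n}$ the cover is wildly ramified over $t=0$, so $p$ must divide $|G|$; for $G\le\Gamma L(1,q^n)$ this forces $p\mid n(q^n-1)$, hence $p=n$. Up to this point your reasoning is sound (and in fact a closer look at the ramification index $e_w=q\cdot(\text{mult.\ of }w\text{ in }L_1)$, which must divide $|G|=p(q^p-1)$, further pins you down to $q=p=n$). But the remaining case $q=p=n$ is not dispatched: the sketch involving an Artin--Schreier subcover and a Hilbert~90/norm computation is not carried out, and it is not clear that it can be made to yield the required contradiction without additional input. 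This is precisely the case your method cannot see by order considerations alone, because $\Gamma L(1,p^p)$ genuinely has order divisible by $p$. The paper's specialization/discriminant approach closes this gap uniformly, so to complete your proof you would either need to finish the $p=n$ analysis in earnest or borrow the paper's method (Lemma~\ref{discriminant_formula} and Theorem~\ref{gmg}) for that residual case.
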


The proof of Theorem \ref{main} utilizes several results and methods from the literature.
We will present the background and statements of these results in Section 2.
Then we present the proof of Theorem \ref{main} in Section 3.
In Section \ref{char2} we present a strong partial result in characteristic 2.

Throughout the paper, we let $L(x)\in \F_q[x]$ be a monic $q$-polynomial of $q$-degree $n$.
We consider $L(x)+tx \in \F_q(t)[x]$, where $t$ is transcendental over $\F_q$.
Let $\Gamma$ be the Galois group of $L(x)+tx$ over $\F_q(t)$.
Since 0 is a root of $L(x)+tx$,
and all elements of $\Gamma$ permute the roots and act linearly,
$\Gamma$ fixes 0. 
Therefore $\Gamma$ is also the Galois group of $(L(x)+tx)/x=L(x)/x+t$.
We will sometimes consider this polynomial, which has degree $q^n-1$.

Let $f(x)\in \F_q[x]$ be any separable polynomial and let $t$ be transcendental over $\F_q$.
The Galois group of $f(x)+t$ over $\F_q(t)$ is called the arithmetic monodromy group of $f$.
If $\overline{\F_q}$ denotes the algebraic closure of $\F_q$ then the
Galois group of $f(x)+t$ over $\overline{\F_q}(t)$ is called the geometric monodromy group of $f$.
It is a standard fact that
the arithmetic monodromy group   contains the geometric monodromy group 
 as a normal subgroup, with cyclic quotient.
It is still an open problem to decide which finite groups occur as monodromy groups.
By the previous paragraph, this paper studies the 
monodromy groups of polynomials of the form $L(x)/x$, where $L(x)\in \F_q[x]$ is a $q$-polynomial.
Theorem \ref{main} essentially states that the arithmetic monodromy group of
$L(x)/x$ is $GL(n,q)$.

Previous work on realizing groups as monodromy/Galois groups includes 
finding a particular polynomial with a certain group. 
 A result of Abhyankar \cite{Ab} shows that the
polynomial $x^{q^n}+x^q+tx$ has Galois group $GL(n,q)$ over $\F_q(t)$  for all positive $n$.
Our result says that when $n$ is a prime, $x^{q^n}+x^q$ can be replaced by 
any $q$-polynomial of $q$-degree $n$ with
coefficients in $\F_q$, apart from $x^{q^n}$, and the Galois group will not change.

Other previous work includes Elkies \cite{E} who
shows (among other things) that the Galois group of 
a generic $q$-polynomial
 \eqref{lin1},
where  $a_0, \ldots , a_n$ are algebraically independent transcendentals,
is $GL(n,q)$ over $\F_q(a_0, \ldots , a_n)$.
Elkies also shows that the Galois group of a
generic  affine polynomial of the form $L(x)+t$, where $L(x)$ is given by \eqref{lin1} 
and  $a_0, \ldots , a_n, t$ are algebraically independent transcendentals,
is $AGL(n,q)$ over $\F_q(a_0, \ldots , a_n, t)$.
One may ask what happens when some of the transcendentals are specialized to be
in $\F_q$. It is easy to see that the Galois group of the resulting polynomial
will be a subgroup of the original group, 
assuming the specialization is separable,
but it is not  clear whether it is \emph{equal}
to the original group.

M\"oller \cite{M} answers this for affine polynomials.
He shows (among other things) that an affine polynomial 
$L(x)+t$ still has Galois group $AGL(n,q)$, where $L(x)$ is a $q$-polynomial with
coefficients in $\F_q$ having $q$-degree $n$, and $n$ is prime.
In this article we answer the question for $GL(n,q)$.
We consider a generic
$q$-polynomial \eqref{lin1} 
and we  specialize all coefficients to be in $\F_q$, except the
coefficient of $x$. We will prove that the Galois group remains 
equal to $GL(n,q)$, when $n$ is an odd prime.
This is equivalent to showing that the arithmetic monodromy group of
$L(x)/x$ is $GL(n,q)$.
Our proof methods use some of the same results from finite group theory as used in \cite{M},
however our proof overall is different, especially in the use of \cite{GMcG}.

\section{Background}

\subsection{Dedekind's Theorem}\label{dedekind}

A theorem of Dedekind is frequently used to obtain information about 
the cycle structure of elements 
in a Galois group. 
The statement here is taken from van der Waerden \cite{vdw} but 
we have changed it to suit our context.

\begin{thm}\label{ded}
Let $f(x)$ be a monic polynomial of degree $n$ with coefficients in $\F_q[t]$.
Let ${\cal P}$ be a prime ideal in $\F_q[t]$, and let $k=\F_q[t]/{\cal P}$.
Suppose that 
\[
f(x) \equiv f_1(x) \cdots f_m(x) \ \textrm{ mod } {\cal P}
\]
where the $f_i(x)$ are distinct monic  irreducible polynomials in $k[x]$.
Let $d_i$ be the degree of $f_i(x)$.
Then the Galois group of $f(x)$ over $\F_q (t)$ contains a permutation that
is a product of disjoint cycles of lengths $d_1, \ldots , d_m$.
\end{thm}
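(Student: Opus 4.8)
The plan is to run the classical ``reduction modulo a prime'' argument using decomposition groups. Write $R=\F_q[t]$, $K=\F_q(t)$, and let $E$ be a splitting field of $f$ over $K$, with roots $\alpha_1,\dots,\alpha_n\in E$; these are integral over $R$, hence lie in the integral closure $S$ of $R$ in $E$. First I would record the separability that is hidden in the hypothesis: since $\bar f:=f \bmod \mathcal P$ is a product of \emph{distinct} irreducibles it is separable, so $\mathrm{disc}(\bar f)\neq 0$; as $\mathrm{disc}(f)\in R$ reduces to $\mathrm{disc}(\bar f)$ modulo $\mathcal P$, this gives $\mathrm{disc}(f)\notin\mathcal P$. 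In particular $f$ is separable over $K$, so $E/K$ is Galois with group $G$, and the extension $E/K$ is unramified above $\mathcal P$.

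Next I would pass to residues. Fix a prime $\mathcal Q$ of $S$ lying over $\mathcal P$, set $\ell=S/\mathcal Q$ (a finite extension of the finite field $k=R/\mathcal P$), and let $\bar\alpha_i$ be the image of $\alpha_i$ in $\ell$. Reducing the factorization $f=\prod_i(x-\alpha_i)$ in $S[x]$ modulo $\mathcal Q$ gives $\bar f=\prod_i(x-\bar\alpha_i)$, so the $\bar\alpha_i$ are exactly the roots of $\bar f$ in $\ell$. Moreover reduction is injective on $\{\alpha_1,\dots,\alpha_n\}$: if $\bar\alpha_i=\bar\alpha_j$ with $i\neq j$ then $\alpha_i-\alpha_j\in\mathcal Q$, which forces $\mathrm{disc}(f)=\prod_{r<s}(\alpha_r-\alpha_s)^2\in\mathcal Q\cap R=\mathcal P$, contradicting the previous paragraph. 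Hence $\alpha_i\mapsto\bar\alpha_i$ is a bijection from the root set of $f$ onto the root set of $\bar f$.

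Now I would invoke the standard theory of the decomposition group $D=D_{\mathcal Q}=\{\sigma\in G:\sigma(\mathcal Q)=\mathcal Q\}$: reduction modulo $\mathcal Q$ induces a surjection $D\to\mathrm{Gal}(\ell/k)$ whose kernel, the inertia group, is trivial because $E/K$ is unramified above $\mathcal P$. Since $k$ is finite, $\mathrm{Gal}(\ell/k)$ is cyclic, generated by the Frobenius $\phi\colon x\mapsto x^{|k|}$; pick $\sigma\in D$ mapping to $\phi$. Then $\overline{\sigma(\alpha_i)}=\phi(\bar\alpha_i)$ for all $i$, so the bijection of the previous paragraph conjugates the permutation of $\{\alpha_1,\dots,\alpha_n\}$ induced by $\sigma$ to the permutation of the roots of $\bar f$ induced by $\phi$; in particular they have the same cycle type. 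Finally, for each irreducible factor $f_j$ of degree $d_j$ over the finite field $k$, its roots in $\ell$ form a single Frobenius orbit $\{\beta,\beta^{|k|},\dots,\beta^{|k|^{d_j-1}}\}$ of size $d_j$, so $\phi$ acts on the roots of $\bar f$ as disjoint cycles of lengths $d_1,\dots,d_m$. Therefore $\sigma\in G$ acts on $\{\alpha_1,\dots,\alpha_n\}$ the same way, which is exactly the claim.

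The step needing the most care is the commutative-algebra input: that $S$ is a Dedekind ring with primes lying over $\mathcal P$, that the map $D\to\mathrm{Gal}(\ell/k)$ is onto, and that $\mathrm{disc}(f)\notin\mathcal P$ indeed forces the inertia group to vanish. In this function-field situation all of it is routine, since $R=\F_q[t]$ is a PID and all residue fields are perfect; alternatively one can sidestep the full integral closure by working in the semilocal ring $R_{\mathcal P}[\alpha_1,\dots,\alpha_n]$, which is unramified over $R_{\mathcal P}$ precisely because $\mathrm{disc}(f)$ becomes a unit there, and run the identical Frobenius computation on its residue fields.
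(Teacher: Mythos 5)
Your proof is correct. Note, however, that the paper does not actually prove Theorem \ref{ded}: it is quoted from van der Waerden \cite{vdw} with only a remark adapting it to the ring $\F_q[t]$, so there is no in-paper argument to compare against line by line. Your route is the standard modern one via decomposition groups: reduce the splitting $f=\prod_i(x-\alpha_i)$ modulo a prime $\mathcal Q$ of the integral closure lying over $\mathcal P$, use $\mathrm{disc}(f)\notin\mathcal P$ (deduced correctly from the distinctness of the $f_i$) to kill inertia and to make reduction injective on the roots, lift the Frobenius of $\ell/k$ to an element $\sigma$ of the decomposition group, and read off the cycle type of $\sigma$ from the Frobenius orbits on the roots of $\bar f$, which are exactly the root sets of the irreducible factors $f_j$ and have sizes $d_j$. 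This differs from van der Waerden's classical treatment, which avoids decomposition groups and instead factors a Galois resolvent $\prod_\sigma\bigl(z-\sum_i u_i\sigma(\alpha_i)\bigr)$ in auxiliary indeterminates $u_i$ to embed the Galois group of $\bar f$ into that of $f$; combined with the fact that the Galois group over the finite field $k$ is generated by Frobenius, that also yields the cycle-type statement. Your version is shorter and more conceptual at the cost of importing the standard Dedekind-domain facts (surjectivity of $D_{\mathcal Q}\to\mathrm{Gal}(\ell/k)$, unramifiedness away from the discriminant), which you correctly identify as the only nontrivial inputs and which are indeed routine for $R=\F_q[t]$.
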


Since every prime ideal ${\cal P}$ in $\F_q[t]$
is a principal ideal generated by an irreducible polynomial $p(t)\in \F_q[t]$,
the reduction $f(x)$ mod ${\cal P}$ can be obtained by replacing $t$ 
in each coefficient of $f(x)$ with a root $\beta \in \overline{\F_q}$ of $p(t)$.
We will be applying this theorem to $L(x)+tx$, with a judicious choice of $\beta$.
Therefore in our case  the reduction mod ${\cal P} $ is equal to  $L(x)+\beta x$.

\subsection{Ramification at $\infty$}

Let $f(x)\in \F_q[x]$ be separable and let $t$ be transcendental over $\F_q$.
The Galois group of $f(x)+t$ over $\F_q(t)$ is called the arithmetic monodromy group of $f$.
%This contains the geometric monodromy group as a normal subgroup, with cyclic quotient.
Because $f(x)$ is a polynomial, $f(x)+t$ is totally ramified at $\infty$.
 When the degree of $f(x)$ is relatively prime to $p$, the ramification at $\infty$ is tame
 and the inertia group of a place lying over $\infty$ is cyclic.
 We state this in the next lemma, which  follows from
 van der Waerden \cite{vdw2}, see also
the discussion in Section 2 of the paper by Cohen \cite{C}.

\begin{lemma}\label{n_cycle}
Let $f(x)\in \F_q[x]$ be a separable polynomial of degree $n$, where $\gcd (n,p)=1$.
Then the Galois group of $f(x)+t$ over $\F_q(t)$ contains an $n$-cycle.
\end{lemma}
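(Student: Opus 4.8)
The plan is to work locally at the place $t=\infty$ of $\F_q(t)$ and to exhibit an $n$-cycle inside the inertia group there, via an explicit Newton--Puiseux computation. Write $\pi=1/t$, a local uniformizer at $\infty$, so the completion of $\F_q(t)$ at $\infty$ is $\F_q((\pi))$. Since $f$ has degree $n$ with leading coefficient $a_n\neq 0$, the equation $f(x)+t=0$ forces $x$ to have a pole at any place above $\infty$, and solving asymptotically gives $x^n\sim -t/a_n=-1/(a_n\pi)$. Hence every root of $f(X)+t$ lies in $\F_{q^m}((v))$ for a suitable $m$, where $v^n=\pi$; here $\gcd(n,p)=1$ is used to ensure that $X^n-\pi$ is separable and that the relevant $n$-th roots (of $-1/a_n$ and of unity) are available in a finite extension of $\F_q$. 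Concretely, if $x_1=\phi(v)$ is one such Laurent-series root, with leading term $c\,v^{-1}$ where $c^n=-1/a_n$, then $x_j:=\phi(\zeta^{j-1}v)$ for $j=1,\dots,n$, with $\zeta$ a primitive $n$-th root of unity, are again roots (substituting $v\mapsto\zeta v$ fixes $v^n=\pi=1/t$), and they are distinct because their leading coefficients $c\zeta^{-(j-1)}$ are distinct. Therefore these are all $n$ roots.

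Next I would identify the relevant automorphism. The extension $\overline{\F_q}((v))/\overline{\F_q}((\pi))$ is Galois, totally and tamely ramified of degree $n$, with cyclic Galois group generated by $\sigma\colon v\mapsto\zeta v$. Since $\sigma$ fixes $\pi=v^n$ it fixes $\overline{\F_q}(t)$, and since its coefficients-fixing action gives $\sigma(x_j)=\phi(\zeta^{j}v)=x_{j+1}$ (indices mod $n$), it acts on $\{x_1,\dots,x_n\}$ as an $n$-cycle. Embedding the splitting field $M\cdot\overline{\F_q}$ of $f(X)+t$ over $\overline{\F_q}(t)$ into $\overline{\F_q}((v))$ through a choice of place above $\infty$, the restriction of $\sigma$ maps the generating set of roots to itself, hence is an automorphism of $M\cdot\overline{\F_q}$ over $\overline{\F_q}(t)$, i.e.\ an element of the geometric monodromy group that is an $n$-cycle. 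As the geometric monodromy group is contained in the arithmetic monodromy group $\mathrm{Gal}(M/\F_q(t))$, this produces the required $n$-cycle.

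I would then add the remark that this is simply the familiar fact that, $f(X)+t$ being a polynomial of degree prime to $p$, the place $\infty$ is tamely ramified, totally ramified already in $\F_q(x)/\F_q(t)$, so a generator of the inertia group at a place of $M$ over $\infty$ acts as an $n$-cycle; the Puiseux computation above just makes that generator explicit. The only point requiring genuine care is that a place of the \emph{full} splitting field $M$ over $\infty$ still has ramification index exactly $n$, so that its inertia group is cyclic of order $n$ and, being transitive on the $n$ roots since $\infty$ is totally ramified in $\F_q(x)$, necessarily acts regularly; this is where $\gcd(n,p)=1$ together with Abhyankar's lemma (equivalently, the explicit root computation) enters. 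Since this is exactly the content quoted from van der Waerden \cite{vdw2} and discussed by Cohen \cite{C}, the proof reduces to assembling these standard local facts.
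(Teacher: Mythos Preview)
The paper does not actually prove this lemma; it merely cites van der Waerden \cite{vdw2} and Cohen \cite{C} for the standard fact that, since $f(X)+t$ is totally and tamely ramified at $\infty$, a generator of the cyclic inertia group there acts as an $n$-cycle on the roots. Your Newton--Puiseux computation is a correct and explicit realization of exactly that argument (and your closing paragraph already acknowledges this), so your proposal is correct and in substance identical to what the paper invokes.
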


We will apply this when $f(x)=L(x)/x$.

\subsection{Singer Cycles}

An element of $GL(n,q)$ of order $q^n-1$ is called a Singer cycle.
Subgroups of $GL(n,q)$ that contain Singer cycles
 are quite restricted in structure and we will use a theorem of Kantor 
 relating to Singer cycles that enables us to identify $\Gamma$ in certain circumstances when it contains a Singer cycle. We remark that it is easy to verify that $(L(x)+tx)/x$ is irreducible over
 $\mathbb{F}_q(t)$ and thus $q^n-1$ divides the order of the Galois group. This fact alone can help to pin down the structure of the Galois group.

We can understand the action of a Singer cycle on the $n$-dimensional space over $\mathbb{F}_q$ if we again invoke the basic theory
of finite fields. Let $F$ denote the finite field $\mathbb{F}_{q^n}$ of order $n$. $F$ is an $n$-dimensional vector space over $\mathbb{F}_q$,
which will serve as a model for our $n$-dimensional space. Let $z$ be an element of multiplicative order $q^n-1$ in $F$ and let
$C$ be the cyclic subgroup generated by $z$. We may call both $C$ and $z$ Singer cycles. Since the nonzero elements of $F$ are powers of $z$, $C$ acts transitively and regularly on the
nonzero elements of $F$. Consider the Frobenius automorphism $\tau$ of $F$ given 
by $\tau(\alpha)=\alpha^q$ for all $\alpha$ in $F$. It is clear
that $\tau$ normalizes $C$, and indeed, if we consider $C$ as a subgroup of $GL(n,q)$, the normalizer $N(C)$ of $C$ in $GL(n,q)$ is generated by
$C$ and $\tau$ and hence has order $n(q^n-1)$. 

\begin{lemma} \label{action_of_normalizer}
 Let $C$ be a Singer cycle in $GL(n,q)$ and let $N(C)$ be its normalizer. Suppose that some element $w$ of $N(C)$ fixes a nonzero vector
 in the underlying vector space over $\mathbb{F}_q$. Then $w$ has order dividing $n$.
 \end{lemma}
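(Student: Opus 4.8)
The plan is to argue in the concrete model $F=\mathbb{F}_{q^n}$ for the underlying $n$-dimensional space, with $C=\langle z\rangle$ acting by multiplication and $\tau\colon\alpha\mapsto\alpha^q$ the Frobenius, so that $N(C)=\langle C,\tau\rangle$ has order $n(q^n-1)$ as recalled above. The first step is to put every element of $N(C)$ into a ``semilinear'' normal form: each $w\in N(C)$ can be written as $w(\alpha)=a\alpha^{q^i}$ for a unique $a\in F^{*}$ and a unique $i$ with $0\le i\le n-1$. Indeed, each such map is the composite of multiplication by $a\in F^{*}=C$ with $\tau^i$, hence lies in $N(C)$; and two such maps with data $(a,i)\ne(b,j)$ are distinct (evaluate at $1$ to separate $a$ from $b$, and then compare exponents modulo $q^n-1$). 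Since there are exactly $n(q^n-1)$ of them, they exhaust $N(C)$.

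Next I would bring in the hypothesis. If $w$ fixes the nonzero vector $v\in F$, then $a v^{q^i}=v$, and as $v\ne0$ this gives $a=v^{\,1-q^i}$. If $i=0$ this already forces $a=1$, so $w=\mathrm{id}$ and we are done; hence assume $1\le i\le n-1$. A routine induction yields the iterate formula
\[
w^{k}(\alpha)=a^{\,1+q^i+q^{2i}+\cdots+q^{(k-1)i}}\,\alpha^{q^{ki}}\qquad(k\ge1).
\]

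Now set $d=\gcd(i,n)$ and $k_0=n/d$, a divisor of $n$. Since $k_0 i$ is a multiple of $n$, we have $q^{k_0 i}\equiv1\pmod{q^n-1}$, so $\alpha^{q^{k_0 i}}=\alpha$ for all $\alpha\in F$. It remains to see that the scalar $a^{\,1+q^i+\cdots+q^{(k_0-1)i}}$ equals $1$; substituting $a=v^{\,1-q^i}$, the exponent telescopes, $(1-q^i)(1+q^i+\cdots+q^{(k_0-1)i})=1-q^{k_0 i}$, which is a multiple of $q^n-1$, so this power of $v$ is $1$. Therefore $w^{k_0}=\mathrm{id}$, and the order of $w$ divides $k_0$, hence divides $n$.

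The whole argument is elementary; there is no serious obstacle. The only points needing care are the bookkeeping with the two generators of $N(C)$ (establishing the semilinear normal form and that it really lists all of $N(C)$) and recognizing that the norm-type exponent telescopes so that the scalar part of $w^{k_0}$ collapses to $1$. Everything else is arranging the indices so the divisibilities by $q^n-1$ line up.
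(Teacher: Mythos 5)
Your proof is correct, but it takes a different and more computational route than the paper. The paper's argument is a two-line application of the orbit--stabilizer theorem: since the Singer cycle $C$ already acts transitively (indeed regularly) on the $q^n-1$ nonzero vectors, so does $N(C)$, and as $|N(C)|=n(q^n-1)$ the stabilizer of any nonzero vector is a subgroup of order exactly $n$; Lagrange then gives that any $w$ fixing a nonzero vector has order dividing $n$. You instead put every element of $N(C)$ into the semilinear normal form $\alpha\mapsto a\alpha^{q^i}$, solve the fixed-point condition for $a=v^{1-q^i}$, and verify by the telescoping norm computation that $w^{n/\gcd(i,n)}=\mathrm{id}$. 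Your calculation is sound (the normal form is exhaustive by the counting argument you give, and the exponent $(1-q^i)(1+q^i+\cdots+q^{(k_0-1)i})=1-q^{k_0 i}$ is indeed divisible by $q^n-1$), and it actually yields slightly more information: the order of $w$ divides $n/\gcd(i,n)$, and the stabilizer of $v$ is exhibited explicitly as $\{\alpha\mapsto v^{1-q^i}\alpha^{q^i}\}$. The trade-off is length: the group-theoretic argument gets the divisibility statement for free from regularity of the $C$-action, whereas your approach requires the bookkeeping with the two generators of $N(C)$ but stays entirely inside elementary finite-field arithmetic.
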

 
 \begin{proof}
 We have observed that $C$ acts transitively and regularly on the $q^n-1$ nonzero vectors. Thus, since $N(C)$ has order $n(q^n-1)$, the orbit-stabilizer theorem shows that the stabilizer of any nonzero vector has order $n$. This proves the lemma.
 \end{proof}

The following theorem of  Kantor \cite{K} classifies the subgroups of 
$GL(n,q)$ that contain a Singer cycle.

\begin{thm}\label{CK}
Let $G$ be a subgroup of $GL(n,q)$ that contains a Singer cycle.
Then $GL(n/d,q^d) \le G \le \Gamma L (n/d,q^d)$ for some divisor $d$ of $n$.
\end{thm}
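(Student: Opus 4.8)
The plan is to prove Theorem~\ref{CK} as a structural classification: first reduce, by a double-centralizer argument, to the case where $G$ is absolutely irreducible over the base field, and then settle that case using the classification of transitive linear groups. Realize $C$ as in the discussion preceding Lemma~\ref{action_of_normalizer}, with $V=\F_{q^n}$ and $C$ acting by multiplication, so that the centralizer of $C$ in $\mathrm{End}_{\F_q}(V)$ is exactly the field $\F_{q^n}$. Since $C\le G$, the commuting ring $\mathrm{End}_G(V)$ is contained in $\F_{q^n}$ and, being a finite division ring, is a subfield $\F_{q^d}$ with $d\mid n$. A short centralizer computation shows that conjugation by $G$ preserves $\F_{q^d}$ and acts on it through $\mathrm{Gal}(\F_{q^d}/\F_q)$, so $G\le\Gamma L(n/d,q^d)$ as a subgroup of $GL(n,q)$. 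I would take $d$ as large as possible; then the kernel $G_0$ of the resulting homomorphism $G\to\mathrm{Gal}(\F_{q^d}/\F_q)$ is absolutely irreducible over $\F_{q^d}$ (otherwise its commuting field would be a strictly larger subfield of $\F_{q^n}$ still normalized by $G$), it still contains $C$ (now a Singer cycle of $GL(n/d,q^d)$), and the desired conclusion $GL(n/d,q^d)\le G\le\Gamma L(n/d,q^d)$ follows as soon as $G_0\supseteq SL(n/d,q^d)$, because the determinant already carries $C$ onto $\F_{q^d}^{\times}$.

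It thus suffices to prove: if $H\le GL(m,Q)$ is absolutely irreducible and contains a Singer cycle, then $H\supseteq SL(m,Q)$ (to be applied with $H=G_0$, $Q=q^d$, $m=n/d$). Since $C$ acts regularly on the nonzero vectors, $H$ is a transitive linear group, and I would feed this into Hering's classification of such groups. That leaves only a short list of possibilities for $H$: groups with $SL(a,R)\trianglelefteq H$, groups with $Sp(a,R)\trianglelefteq H$, the six-dimensional $G_2$-case, finitely many small exceptions (various $SL(2,r)$, $A_6$, $A_7$, and a few others in their defining dimensions), and the case $H\le\Gamma L(1,Q^m)$. In the $SL(a,R)$ case, absolute irreducibility over $\F_Q$, together with the maximality of $d$ (which, as $SL(a,R)$ is characteristic in $H=G_0\trianglelefteq G$, forces $G$ to normalize the commuting field $R$ of $SL(a,R)$), forces $R=\F_Q$ and $a=m$, so $H\supseteq SL(m,Q)$, as wanted.

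The remaining possibilities are eliminated by one uniform observation: none of them contains an element of order $Q^m-1$. For $Sp(a,R)$ with $a\ge4$ and for $G_2(R)$ one compares $Q^m-1$ with the orders of the maximal tori, all of which are strictly smaller, and the finitely many sporadic candidates are dispatched from their known element orders (for instance $A_7\le GL(4,2)$ has no element of order $15$). Finally, in the case $H\le\Gamma L(1,Q^m)$ one invokes the normalizer computed before Lemma~\ref{action_of_normalizer}: $N_{GL(n,q)}(C)$ is generated by $C$ and the Frobenius $\tau$ and has order $n(q^n-1)$, so any subgroup of it containing $C$ lies between $GL(1,q^n)$ and $\Gamma L(1,q^n)$; tracing this back through the reduction yields the statement of the theorem with $d=n$.

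I expect the genuine obstacle to be precisely this case analysis, not the reduction, which is formal. Everything after the first paragraph rests on the classification of transitive (equivalently, absolutely irreducible with a regular cyclic subgroup) linear groups, and the delicate points are to be certain that no exotic absolutely irreducible overgroup of a Singer cycle exists beyond the full linear group and $\Gamma L(1,q^n)$ (in particular, to rule out class-$\mathcal{S}$ type examples), and to keep careful track of the field of definition, so that when $n$ is prime exactly the two cases $d=1$ (giving $G=GL(n,q)$) and $d=n$ (giving $C\le G\le\Gamma L(1,q^n)$) survive. A more classical route, closer to Kantor's original argument, would instead run through Aschbacher's subgroup classes directly and invoke the known list of almost simple groups possessing a cyclic subgroup of order comparable to $q^{\dim}-1$, but the essential bookkeeping is the same.
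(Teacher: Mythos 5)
First, a point of comparison: the paper does not prove Theorem \ref{CK} at all --- it is quoted as a black box from Kantor \cite{K} --- so there is no internal proof to measure your attempt against; you are reproving a cited result. Judged on its own terms, your sketch has the right general shape (reduce to an absolutely irreducible, hence transitive, linear group and invoke a Hering-type classification), but it contains one step that is false and one that is hand-waved exactly where the false step would be needed. The claim ``if $H\le GL(m,Q)$ is absolutely irreducible and contains a Singer cycle, then $H\supseteq SL(m,Q)$'' fails for essentially every $m\ge 2$: the normalizer $N(C)=C\rtimes\langle\tau\rangle$ of a Singer cycle is itself absolutely irreducible over $\F_Q$ (its commuting algebra is the $\tau$-fixed subfield of $\F_{Q^m}$, namely $\F_Q$), contains $C$, and has order $m(Q^m-1)$, far smaller than $|SL(m,Q)|$. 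So it does not ``suffice'' to prove that statement; what your case analysis actually delivers is a dichotomy --- $H\supseteq SL(m,Q)$ or $H\le\Gamma L(1,Q^m)$ --- and the second branch is precisely the one you dismiss with ``tracing this back through the reduction yields $d=n$.'' That tracing-back is the missing argument: you must show that if $G_0\le\Gamma L(1,q^n)$ then all of $G$, not just the normal subgroup $G_0$, normalizes the field $\F_q[C]=\F_{q^n}$ inside $\mathrm{End}_{\F_q}(V)$, contradicting the maximality of $d$ unless $d=n$; this needs a genuine reason (for instance that $\F_{q^n}^{\times}$ is characteristic in the relevant subgroup, or a self-centralizing/Sylow argument for $C$), and none is supplied.

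Two softer caveats. Eliminating the $Sp$, $G_2$ and sporadic branches requires excluding an element of order $Q^m-1$ from the full normalizer in $\Gamma L(m,Q)$ of the relevant quasisimple group (scalars and field automorphisms included), not merely from the group itself, so ``compare with the maximal tori'' is not quite sufficient as stated, although the computation is standard. More seriously, the whole architecture rests on the classification of transitive linear groups, which is CFSG-level machinery at least as deep as the theorem being proved; Kantor's short original argument, which is what the paper actually relies on, runs instead through the classification of $2$-transitive collineation groups of $PG(n-1,q)$. Since the paper only ever uses the statement as an external citation, the practical fix is to cite \cite{K} rather than reprove it; if you do want a self-contained proof, the $\Gamma L(1,Q^m)$ branch and the normalizer-versus-simple-group distinction are the two places that need real work.
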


We note that the case $d=1$ corresponds to $G=GL(n,q)$, and the case $d=n$
corresponds to $C\le G \le N(C)$ where $C$ is a Singer cycle.
We also note that if $n$ is a prime number, then these are the only possibilities.

 \subsection{Discriminant and Galois Theory}
 
 It will be useful for us to have a formula for the discriminant of a $q$-polynomial. The following must be well known but we provide a proof, as it is quite short.
 
 \begin{lemma} \label{discriminant_formula}
 Let $q$ be a power of the odd prime $p$ and let 
   $l(x)$ be a monic $q$-polynomial of $q$-degree $n\geq 1$ in $\mathbb{F}_q(t)[x]$. Suppose
 that the constant term $c$ of $G(x)=L(x)/x$ is nonzero. Then modulo squares in $\mathbb{F}_q(t)$, the discriminant of $G$ is
 \[
 (-1)^{(q^n-1)/2}c.
 \]
 
 \end{lemma}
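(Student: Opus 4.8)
The plan is to combine the standard formula expressing the discriminant of a monic polynomial in terms of the values of its derivative at the roots with the special feature of $q$-polynomials that their formal derivative is a constant.

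Write $d=q^n-1=\deg G$ and let $\alpha_1,\dots,\alpha_d$ be the roots of $G$ in a splitting field; these are precisely the nonzero roots of $L$, and they are distinct since $c\neq 0$. I would start from the identity $\operatorname{disc}(G)=(-1)^{d(d-1)/2}\prod_{i=1}^{d}G'(\alpha_i)$, which holds because $G$ is monic. The crucial observation is that for $L(x)=\sum_{i=0}^{n}a_ix^{q^i}$ with $a_0=c$, every term of $L'(x)$ beyond the constant one has exponent divisible by $p$ and hence vanishes, so $L'(x)=c$ identically. Differentiating $L(x)=xG(x)$ gives $L'(x)=G(x)+xG'(x)$, and evaluating at $\alpha_i$ (a root of $G$, hence a nonzero root of $L$) yields $G'(\alpha_i)=c/\alpha_i$.

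Next I would assemble the product: $\prod_{i}G'(\alpha_i)=c^{d}/\prod_i\alpha_i$. Since $G$ is monic of degree $d$ with constant term $c$, we have $\prod_i\alpha_i=(-1)^{d}c$, and as $q$ is odd the integer $d=q^n-1$ is even, so $\prod_i\alpha_i=c$ and $\prod_i G'(\alpha_i)=c^{\,d-1}=c^{\,q^n-2}$. Because $q^n-2$ is odd, this is congruent to $c$ modulo squares in $\mathbb{F}_q(t)$. It remains to pin down the sign: with $d$ even we have $d(d-1)/2=(d/2)(d-1)$ and $d-1$ is odd, so $(-1)^{d(d-1)/2}=(-1)^{d/2}=(-1)^{(q^n-1)/2}$. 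Putting the pieces together gives $\operatorname{disc}(G)\equiv(-1)^{(q^n-1)/2}c$ modulo squares, as asserted.

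I do not expect a serious obstacle here: the argument is short, and the only points that need care are the parity bookkeeping (that $q^n-1$ is even while $q^n-2$ is odd, both of which rely on $q$ being odd) and the observation that the constant term of $G$ is the same as the coefficient of $x$ in $L$, so that the constant value $L'(x)$ genuinely equals the element $c$ appearing in the statement.
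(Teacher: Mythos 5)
Your argument is correct and follows essentially the same route as the paper's proof: both start from $\operatorname{disc}(G)=(-1)^{d(d-1)/2}\prod_i G'(\alpha_i)$, differentiate $xG(x)=L(x)$ to get $\alpha_i G'(\alpha_i)=c$, and use that the product of the roots is $c$ because $d=q^n-1$ is even. The only difference is that you carry out explicitly the parity bookkeeping that the paper dismisses as ``routine to check,'' and you do it correctly.
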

 
 \begin{proof}
 We set $m=q^n-1$ and note that $m$ is even. Since $c$ is nonzero, it is easy to see that each root of $G$ has multiplicity one. Let
 the roots of $G$ be $\alpha_1$, \dots, $\alpha_m$. Then, if $D$ denotes the discriminant of $g$, it is well known that
 \[
 D=(-1)^{m(m-1)/2}\prod_{i=1}^m G'(\alpha_i).
 \]
 Now we have $xG(x)=L(x)$ and thus we may differentiate to obtain $G(x)+xG'(x)=L'(x)$. But $L'(x)=c$ since $L$ is a $q$-polynomial
 and thus we obtain
 \[
 \alpha_iG'(\alpha_i)=c
 \]
 for every root $\alpha_i$.
 
 Since $m$ is even, the product of the roots is $c$. We therefore obtain
 \[
 c\prod_{i=1}^m G'(\alpha_i)=c^m.
 \]
 When we recall that $m=q^n-1$, the rest of the proof is routine to check.
 
 \end{proof}
 
 Lemma \ref{discriminant_formula} shows that when $q$ is odd, the question of whether the Galois group of $q$-polynomial of $q$-degree $n$ is in the alternating group $A_{q^n-1}$ or not is determined by the coefficient of $x$. 
 We next show for the record that the situation is different when $q$ is even;
 the Galois group is almost always contained in the alternating group.
 
 \begin{thm} \label{almost_always_alternating}
 Let $q$ be a power of $2$ and let $F$ be a field of characteristic $2$ that contains $\F_q$. Let $L$ be a $q$-polynomial
 of $q$-degree $n$ in $F[x]$ with distinct roots. Then unless $q=n=2$, the Galois group of $L$ considered as a permutation
 group on the $q^n-1$ roots of $L(x)/x$ is in the alternating group $A_{q^n-1}$.
 \end{thm}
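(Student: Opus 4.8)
The plan is to reduce the statement to a parity computation on permutations coming from the action of the Galois group. Since $q$ is even, the permutation group lives in the symmetric group $S_{q^n-1}$ on the roots of $L(x)/x$, and we must show that every element acts as an even permutation. Equivalently, the subgroup of $S_{q^n-1}$ generated by all the ramification-theoretic generators of the Galois group (or, more simply, the whole group) lies in $A_{q^n-1}$. The clean way to see this is to compute the sign of each element of $GL(n,q)$ — or more precisely of the subgroup $\Gamma$ — acting on the $q^n-1$ nonzero vectors of the natural module $V = \F_q^n$, and to show this sign is always $+1$.

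First I would set up the sign character. For $g \in GL(n,q)$ acting on the $m = q^n - 1$ nonzero vectors of $V$, the sign of $g$ as a permutation of these vectors equals $(-1)^{m - (\text{number of orbits of }\langle g\rangle \text{ on } V\setminus\{0\})}$. So I need to understand, for each $g$, the number of orbits of the cyclic group $\langle g \rangle$ on $V \setminus \{0\}$, and compare its parity to that of $m = q^n - 1$. Since $q$ is even, $q^n - 1$ is odd, so $g$ is an even permutation precisely when the number of orbits of $\langle g\rangle$ on $V\setminus\{0\}$ is odd, i.e. precisely when the number of orbits of $\langle g\rangle$ on the nonzero vectors has the same parity as $q^n-1$. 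A cleaner route: it suffices to show the sign character of the action of $GL(n,q)$ on $V\setminus\{0\}$ is trivial, because $\Gamma \le GL(n,q)$ by Lemma \ref{vector_space} (applied to the splitting field, using that the constant term is nonzero when $L \ne x^{q^n}$, and handling $L = x^{q^n}$ separately as there the polynomial $L(x)/x = x^{q^n-1}$ whose roots form a cyclic group — but wait, that case needs care since then $GL$ is not the right group; however that case has $q=n=2$ excluded or is handled directly). Actually the cleanest formulation: show that the sign of the permutation action of $\textbf{any}$ $g \in GL(n,q)$ on $V \setminus \{0\}$ is $+1$ whenever $q^n > 2$ is even.

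The key step is therefore the following group-theoretic fact: if $q$ is even and $q^n > 2$, then $GL(n,q)$ has no subgroup of index $2$, equivalently its abelianization has odd order, equivalently $GL(n,q) = GL(n,q)'$ except in the excluded tiny case. Indeed for $q$ even, $SL(n,q) = GL(n,q)$, and $SL(n,q)$ is its own commutator subgroup (i.e. perfect) except for $SL(2,2)\cong S_3$ and $SL(2,3)$; since $q$ is even the only exception is $SL(2,2)$, which is exactly $q=n=2$. Hence for $q$ even and $q^n>2$, $GL(n,q)$ is perfect, so it admits no surjection onto $\Z/2\Z$, so the composite $GL(n,q) \to S_{q^n-1} \xrightarrow{\ \mathrm{sgn}\ } \{\pm 1\}$ is trivial. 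Since $\Gamma$ embeds in $GL(n,q)$ (or, in the degenerate $L=x^{q^n}$ case, in $\Gamma L(1,q^n)$ whose relevant action I would check separately — there $L(x)/x = x^{q^n-1}$ is inseparable unless... actually $x^{q^n-1}$ has a single root of high multiplicity, so the hypothesis "distinct roots" rules this out, meaning $a_0 \ne 0$ and Lemma \ref{vector_space} applies), it follows that $\Gamma \subseteq A_{q^n-1}$.

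The main obstacle I anticipate is not the perfectness of $SL(n,q)$ for $q$ even (this is classical), but rather making sure the embedding $\Gamma \hookrightarrow GL(n,q)$ is legitimately available here: Lemma \ref{vector_space} requires $a_0 \ne 0$, i.e. that the roots of $L(x)/x$ be distinct, which is exactly the hypothesis of Theorem \ref{almost_always_alternating}. Once that is in hand, the argument is: $\Gamma$ is a subgroup of $GL(n,q)$, and composing with the sign character of the action on the $q^n-1$ nonzero vectors gives a homomorphism to $\{\pm1\}$; this homomorphism factors through the abelianization of $GL(n,q)$, which is trivial when $q$ is even and $(q,n)\neq(2,2)$ because $GL(n,q)=SL(n,q)$ is perfect; therefore $\Gamma \le A_{q^n-1}$. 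I would present it in this order: (i) reduce to $GL(n,q)$ via Lemma \ref{vector_space}; (ii) recall that $GL(n,q)=SL(n,q)$ is perfect for $q$ even, $(q,n)\ne(2,2)$; (iii) conclude the sign character is trivial on $\Gamma$. If one wants a self-contained parity count instead of quoting perfectness, one can alternatively compute $\mathrm{sgn}(g)$ directly via the orbit-count formula above and the fact that a transvection (which generate $SL(n,q)=GL(n,q)$) fixes a hyperplane's worth of vectors and has even order $2$ but acts with many fixed points — checking a transvection is even then suffices, since the even permutations generated by generators that are all even form an even subgroup.
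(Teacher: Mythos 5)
Your overall strategy is exactly the paper's: embed $\Gamma$ into $GL(n,q)$ via Lemma \ref{vector_space} (the distinct-roots hypothesis gives $a_0\neq 0$, as you correctly note), observe that the sign of the permutation action on $\F_q^n\setminus\{0\}$ is a homomorphism $GL(n,q)\to\{\pm 1\}$, and kill it by showing $GL(n,q)$ has no subgroup of index $2$ outside the case $q=n=2$. The paper states this last fact directly (``$GL(n,q)$ has no normal subgroup of index~$2$'') and concludes.

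However, your justification of that key fact contains a genuine error: you assert that for $q$ even one has $SL(n,q)=GL(n,q)$, and hence that $GL(n,q)$ is perfect away from $GL(2,2)$. This is false for every even $q>2$: the determinant surjects $GL(n,q)$ onto $\F_q^*$, a group of order $q-1>1$, so $SL(n,q)$ is a proper subgroup of index $q-1$ and the abelianization of $GL(n,q)$ is $\F_q^*$, which is nontrivial (you also conflate ``abelianization of odd order'' with ``perfect''; these are not equivalent). The conclusion you need is nevertheless true and the repair is one line: any homomorphism $GL(n,q)\to\mathbb{Z}/2\mathbb{Z}$ must vanish on $SL(n,q)$, since $SL(n,q)$ is perfect for $(n,q)\neq(2,2)$ (and is trivial for $n=1$), and hence factors through $GL(n,q)/SL(n,q)\cong\F_q^*$, a group of odd order $q-1$; therefore the homomorphism is trivial. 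With that substitution your argument is complete and coincides with the paper's proof. Your side remarks handling $L=x^{q^n}$ are unnecessary once you observe, as you eventually do, that the distinct-roots hypothesis already excludes $a_0=0$.
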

 
 \begin{proof}
 Let $\Gamma$ be the Galois group of $L$ over $F$. We know that $\Gamma$ may be considered to be a subgroup of $GL(n,q)$. Now apart
 from the case that $q=n=2$, $GL(n,q)$ has no normal subgroup of index 2. It follows that in the action of $GL(n,q)$ on $\F_q^n\setminus \{ 0 \}$,
 $GL(n,q)$ embeds into $A_{q^n-1}$. Thus, since $\Gamma$ is a subgroup of $GL(n,q)$, the same holds for $\Gamma$, except possibly
 when $q=n=2$.
 \end{proof}
 
 The case when $q=n=2$ is certainly exceptional. To illustrate this exceptionality, consider $L(x)=x^4+x$ in $\F_2[x]$.
 We have $L(x)=x(x+1)(x^2+x+1)$ in $\F_2[x]$, where $x^2+x+1$ is irreducible. Thus the Galois group has order 2 and is contained
 in $S_3$ but not $A_3$. 
 
 %The Berlekamp discriminant is used as a substitute for the ordinary discriminant in characteristic 2 but Lemma \ref{almost_always_alternating} shows that the Berlekamp discriminant is usually zero for a $q$-polynomial when $q$ is a power of 2.

%We will use the fact that for any polynomial of degree $m$ with
%coefficients in a field $K$ of characteristic different from 2, whose Galois group
%is naturally a subgroup of the symmetric group $S_m$, the Galois group is a 
%subgroup of the alternating group $A_m$ if and only if the discriminant is a square in $K$.

Finally for this section, we record
 the following Lemma  from elementary Galois theory which we will use.
It follows from Corollary 3.19 in \cite{Milne}.

\begin{lemma}\label{compositum}
Let $K$ and $L$ be finite-dimensional  extensions of a field $F$, that are both
contained in a field $M$. 
Suppose that the degrees $[K:F]$ and $[L:F]$ are both odd and at least one of $K$ and $L$ is a Galois extension of $F$.
Then $[KL:F]$ is odd, where $KL$ is  the compositum of $K$ and $L$ in $M$.
\end{lemma}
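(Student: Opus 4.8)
The plan is to reduce the whole statement to multiplicativity of degrees in a tower, combined with the standard fact (recalled as Corollary 3.19 in \cite{Milne}) describing the compositum of an arbitrary extension with a Galois one. Without loss of generality I would first relabel $K$ and $L$ so that $L/F$ is the extension assumed to be Galois; this is legitimate precisely because the hypothesis guarantees at least one of them is Galois over $F$, and the conclusion $[KL:F]$ odd is symmetric in $K$ and $L$.

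With that normalization in place, I would apply Corollary 3.19 of \cite{Milne} with $K$ playing the role of the base field: since $L/F$ is Galois and both $K$ and $L$ lie in $M$, the compositum $KL$ is Galois over $K$, and restriction of automorphisms gives an isomorphism $\mathrm{Gal}(KL/K)\cong \mathrm{Gal}(L/(K\cap L))$. In particular $[KL:K]=[L:K\cap L]$, and this quantity divides $[L:F]$ because $F\subseteq K\cap L\subseteq L$. Since $[L:F]$ is odd by hypothesis, so is its divisor $[KL:K]$. Finally the tower law gives $[KL:F]=[KL:K]\,[K:F]$, a product of two odd integers, hence odd, which is the assertion of the lemma.

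There is essentially no obstacle here: the lemma is a routine consequence of the behaviour of a compositum with a Galois extension together with the tower formula for degrees. The only point needing a moment's attention is the bookkeeping at the start, namely that the Galois hypothesis must be attached to the field one adjoins rather than to the base $K$; once $L$ is chosen to be the Galois one, everything follows immediately.
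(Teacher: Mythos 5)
Your proof is correct and takes essentially the same route as the paper, which simply cites Corollary 3.19 of Milne's notes: that result (equivalently, the isomorphism $\mathrm{Gal}(KL/K)\cong\mathrm{Gal}(L/(K\cap L))$ you invoke, or the degree formula $[KL:F]=[K:F][L:F]/[K\cap L:F]$) is exactly the content being used, and your argument just spells out the divisibility and tower-law bookkeeping.
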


We will apply this lemma when $F$ is a finite field, and all finite extensions are
automatically Galois extensions.

\subsection{Theorem of G\"olo\u{g}lu-McGuire}

Our proof uses the following theorem from \cite{GMcG}.

\begin{thm}\label{gmg}
 Let $p$ be an odd prime and let $q=p^m$.
 Let $L(x)$ be a  $p$-linearized polynomial with coefficients in $\F_q$.
  Let $Im(L(x)/x)$ denote the image of $L(x)/x$, considered as a function
 $\F_q \longrightarrow \F_q$. Then
 \begin{equation*}\label{ourcond}
 Im(L(x)/x) \subseteq  \{ \beta^2 \ : \ \beta \in \F_q^* \} \cup \{0\}
 \end{equation*} 
 if and only if  $L(x) = a x^{p^d}$ for some $a\in \F_q^*$ which is a square, and some $0\le d < m$.
\end{thm}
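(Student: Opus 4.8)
The implication ``$L(x)=ax^{p^d}$ with $a$ a square $\Rightarrow Im(L(x)/x)\subseteq\{\beta^2\}\cup\{0\}$'' is immediate: then $L(x)/x=ax^{p^d-1}$, which is a square for every $x\in\F_q^*$ because $p^d-1$ is even and $a$ is a square, while its value at $0$ is $0$ (if $d\ge1$) or the square $a$ (if $d=0$). So the substance is the converse. First I would make two harmless reductions. Reducing $L$ modulo $x^q-x$ preserves both $p$-linearity and the function $L(x)/x$ on $\F_q$, so I may assume the $p$-degree $n$ of $L$ satisfies $n<m$ (and that $L\neq0$, the degenerate case). If the constant term of $L$ vanishes, write $L(x)=M(x^{p^e})$ with $M$ a $p$-linearized polynomial of nonzero constant term; since $x\mapsto x^{p^e}$ is a bijection of $\F_q$ and $x^{p^e-1}$ is always a square, the hypothesis on $L$ transfers to the hypothesis ``$M(y)/y$ is a square for all $y\in\F_q^*$'' on $M$, so it suffices to treat $M$. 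Hence I assume $a_0$, the constant term of $L$, is nonzero; then $L$ is separable of degree $p^n$, and $a_0=L(1)\in Im(L(x)/x)$ is automatically a nonzero square. It remains to show $n=0$, so from now on I assume $n\ge1$ and look for a contradiction.

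The first real step eliminates the kernel. Let $\chi$ be the quadratic character of $\F_q$ (with $\chi(0)=0$) and let $K=\{x\in\F_q:L(x)=0\}$, an $\F_p$-subspace with $|K|=p^k$. The hypothesis says $\chi(L(x)/x)\neq-1$ for all $x$, i.e.\ $\chi(x)=\chi(L(x))$ whenever $L(x)\neq0$; since $L(\beta+v)=L(\beta)$ for $v\in K$, this forces $\chi$ to be constant on every coset $\beta+K\neq K$. Picking $0\neq\alpha\in K$ and using the classical evaluation $\sum_{\beta\in\F_q}\chi(\beta)\chi(\beta+\alpha)=-1$, one checks that the $q-p^k$ terms with $\beta\notin K$ contribute exactly $q-p^k$, while the remaining $p^k$ terms contribute at most $p^k$ in absolute value; hence $-1\ge q-2p^k$, which forces $p^{m-k}<2$, i.e.\ $k=m$. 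But $k=m$ means $L\equiv0$ on $\F_q$, which is impossible since $a_0\neq0$ and $\deg(L(x)/x)=p^n-1<q$. Therefore $K=\{0\}$, so $L$ is a linearized permutation of $\F_q$.

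With $K=\{0\}$ we have $\chi(L(x)/x)=\chi(xL(x))$ for $x\neq0$, and $L(x)\neq0$ off $0$, so the hypothesis becomes $\sum_{x\in\F_q}\chi(xL(x))=q-1$. Now $h(x):=xL(x)=a_nx^2\prod_{\rho\neq0}(x-\rho)$ over the $p^n-1\ge1$ distinct nonzero roots $\rho$ of $L$, so $h$ is not a constant times a square in $\overline{\F_q}[x]$, and Weil's bound (in the form bounding the sum by $(\nu-1)\sqrt q$, with $\nu=p^n$ the number of distinct roots of $h$) gives $\bigl|\sum_x\chi(h(x))\bigr|\le(p^n-1)\sqrt q$. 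Comparing, $q-1\le(p^n-1)\sqrt q$; if $n\le m/2$ this reads $q-1\le(p^{m/2}-1)\sqrt q=q-\sqrt q$, a contradiction, so in that range $n=0$ and we are done.

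The main obstacle is the remaining range $m/2<n\le m-1$, which occurs only for $m\ge3$ and where Weil's bound is too weak. Here the plan is to decompose the sum through the quadratic Gauss sum, $\chi(a)=G^{-1}\sum_z\psi(az^2)$ with $\psi$ a nontrivial additive character and $|G|=\sqrt q$, obtaining $q-1=G^{-1}\sum_{z\neq0}T(z)$ where $T(z)=\sum_x\psi(z^2xL(x))$ is the character sum of the quadratic form $x\mapsto\mathrm{Tr}(z^2xL(x))$; since $L$ is additive this really is a quadratic form, and the radical of its polar form is the kernel of the $\F_p$-linear map $y\mapsto z^2L(y)+\widehat{L}(z^2y)$, where $\widehat L$ is the trace-adjoint of $L$ and has $p$-degree $m-n<m/2$. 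One would use this small degree to control the radical dimensions $r_z$, so that each $T(z)$ is a quadratic-form sum of absolute value $\sqrt q\cdot p^{r_z/2}\ge\sqrt q$, and then extract a contradiction from the \emph{exact} identity $\sum_{z\neq0}T(z)=G(q-1)$ by a rigidity argument in the triangle inequality. Making this final step work --- i.e.\ ruling out linearized permutations of $\F_q$ of $p$-degree strictly between $m/2$ and $m-1$ all of whose normalized values $L(x)/x$ are squares --- is where I expect the real difficulty to lie, and is presumably the technical heart of \cite{GMcG}.
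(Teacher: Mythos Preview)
The paper does not prove this theorem: it is quoted from \cite{GMcG} as an external input, with no argument supplied. So there is no in-paper proof to compare your attempt against.

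On its own merits, your write-up is a correct partial proof. The easy direction, the reduction to $a_0\neq0$ and $n<m$, the kernel-elimination via the Jacobi-sum identity $\sum_\beta\chi(\beta)\chi(\beta+\alpha)=-1$, and the Weil-bound step disposing of the range $n\le m/2$ are all sound. The genuine gap is exactly the one you flag: the range $m/2<n\le m-1$ (which already appears for $m=3$, $n=2$) is not handled, and your Gauss-sum/quadratic-form outline, while a natural line of attack, is only a sketch. One inaccuracy in that sketch: the trace-adjoint $\widehat L(x)=\sum_{i=0}^{n} a_i^{p^{m-i}}x^{p^{m-i}}$ has, as a reduced polynomial on $\F_q$, $p$-degree $m-1$ whenever $a_1\neq0$, not $m-n$; the exponents that occur are $p^0$ together with $p^{m-n},\ldots,p^{m-1}$, so the linearized map $y\mapsto z^2L(y)+\widehat L(z^2y)$ has $p$-degree up to $m-1$ and its kernel is not obviously small. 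Some further device (a Frobenius twist to bring the relevant equation down to $p$-degree $n$, or a second-moment computation on the $T(z)$) is needed before anything can be concluded, and that is where the actual work of \cite{GMcG} --- which, as its title indicates, builds on results of Carlitz and Payne on permutation polynomials --- takes place.
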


\section{Proof of Main Theorem}

In this section we present the proof of Theorem \ref{main}.
We assume that $q$ is odd and that $n$ is an odd prime, although some parts of 
the argument hold in more generality.

 \setcounter{thm}{1}

\begin{thm}
Let $q$ be odd, and let $n$ be an odd prime. 
Let $L(x)\in \F_q[x]$ be a monic $q$-polynomial of $q$-degree $n$.
Then the Galois group of $L(x)+tx \in \F_q(t)[x]$ is isomorphic to $GL(n,q)$, unless $L(x)=x^{q^n}$,
in which case the Galois group is isomorphic to $\Gamma L (1,q^n)$.
\end{thm}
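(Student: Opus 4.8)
The plan is to pin down $\Gamma$, the Galois group of $L(x)+tx$ over $\F_q(t)$, by combining the structural constraints on subgroups of $GL(n,q)$ with concrete elements produced by the ramification and reduction results of Section 2. First I would record that $\Gamma \le GL(n,q)$ by Lemma \ref{vector_space} (here $a_0 = t \neq 0$), that $(L(x)+tx)/x$ is irreducible over $\F_q(t)$ so that $q^n-1 \mid |\Gamma|$, and that since $\deg(L(x)/x) = q^n-1$ is coprime to $p$, Lemma \ref{n_cycle} gives an $(q^n-1)$-cycle in $\Gamma$ acting on the nonzero vectors — in other words, $\Gamma$ contains a Singer cycle of $GL(n,q)$. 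Kantor's Theorem \ref{CK}, together with the hypothesis that $n$ is prime, then forces either $\Gamma = GL(n,q)$ or $C \le \Gamma \le N(C)$ for a Singer cycle $C$, i.e.\ $\Gamma$ is a subgroup of $\Gamma L(1,q^n)$ of order divisible by $q^n-1$.

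The heart of the argument is to rule out the second alternative except when $L(x) = x^{q^n}$. Suppose $C \le \Gamma \le N(C)$, so $|\Gamma|$ divides $n(q^n-1)$; then $\Gamma$ has a normal Singer cycle and every element outside $C$ involves a nontrivial power of the Frobenius $\tau$. Here I would bring in Dedekind's Theorem \ref{ded}: for each $\beta \in \overline{\F_q}$ that is a root of an irreducible $p(t) \in \F_q[t]$, the factorization of $L(x)+\beta x$ over $k = \F_q[t]/(p(t))$ produces an element of $\Gamma$ whose cycle type on the roots is read off from the degrees of the irreducible factors. The idea is to choose $\beta \in \F_q^*$ itself (so $k = \F_q$): then $L(x)+\beta x = (L(x)/x + \beta)\cdot x$, and a nonzero root $\alpha \in \F_q$ of $L(x)/x + \beta$ exists precisely when $-\beta \in Im(L(x)/x)$. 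Such a root gives a linear factor $x - \alpha$ over $\F_q$, hence a fixed nonzero vector for the corresponding Frobenius element of $\Gamma$; by Lemma \ref{action_of_normalizer}, any element of $N(C)$ fixing a nonzero vector has order dividing $n$. So if $\Gamma \le N(C)$, then \emph{every} specialization $\beta \in \F_q^*$ with $-\beta \in Im(L(x)/x)$ must be compatible with $\Gamma$ having order dividing $n(q^n-1)$ — and more sharply, I want to leverage the discriminant. By Lemma \ref{discriminant_formula}, $\Gamma \le A_{q^n-1}$ iff the constant term $c$ of $L(x)/x$ is (up to sign and squares) a square in $\F_q(t)$; tracking whether the Frobenius-type elements of $N(C)$ are even or odd permutations on the $q^n-1$ points will let me contradict membership in $N(C)$ unless the image $Im(L(x)/x)$ is extremely degenerate.

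This is exactly where Theorem \ref{gmg} enters and is, I expect, the linchpin: it says that $Im(L(x)/x) \subseteq \{\text{squares}\} \cup \{0\}$ forces $L(x) = a x^{p^d}$ with $a$ a square, and since $L$ is monic of $q$-degree $n$ with $n$ prime this collapses to $L(x) = x^{q^n}$. So the strategy is to show that if $\Gamma \le \Gamma L(1,q^n)$ and $L(x) \neq x^{q^n}$, then one can find $\beta \in \F_q^*$ with $-\beta$ a nonzero non-square value in $Im(L(x)/x)$, produce via Dedekind an element of $\Gamma$ that both fixes a nonzero vector (hence has order dividing $n$ by Lemma \ref{action_of_normalizer}) and is an odd permutation (via a discriminant/parity computation on its cycle structure as an element of $N(C)$), and derive a contradiction with the even-ness forced by the structure of $N(C)$ acting on $q^n-1$ points — or directly contradict the order constraint. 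Finally, in the genuinely exceptional case $L(x) = x^{q^n}$, the polynomial is $x^{q^n} + tx$, whose roots together with $0$ form the set of solutions of $x^{q^n-1} = -t$; this is a Kummer-type situation and a direct computation shows the splitting field is $\F_q(s)$ with $s^{q^n-1} = -t$ adjoined to a primitive $(q^n-1)$-st root of unity, giving Galois group $\Gamma L(1,q^n)$. The main obstacle is the parity/discriminant bookkeeping: showing cleanly that a Frobenius-twist element of $N(C)$ with a fixed nonzero vector acts as an odd permutation on the $q^n-1$ nonzero vectors whenever the relevant value of $L(x)/x$ is a non-square, so that Theorem \ref{gmg} can be invoked to conclude $L(x) = x^{q^n}$.
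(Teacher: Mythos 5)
Your overall architecture matches the paper's: a Singer cycle from Lemma \ref{n_cycle}, Kantor's Theorem \ref{CK} to reduce to the dichotomy $\Gamma = GL(n,q)$ or $C \le \Gamma \le N(C)$, then Dedekind reduction plus Lemma \ref{action_of_normalizer} plus the discriminant formula to force the image of $L(x)/x$ into the squares, and finally Theorem \ref{gmg} to conclude $L(x) = x^{q^n}$. Your treatment of the exceptional case $L(x)=x^{q^n}$ is also correct.

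However, there is a genuine gap in the central step: you specialize only at $\beta \in \F_q^*$ and look for roots $\alpha \in \F_q$, i.e., you treat $Im(L(x)/x)$ as the image of a function on $\F_q$. This gives essentially no information: for $\alpha \in \F_q^*$ one has $\alpha^{q^i-1}=1$, so $L(\alpha)/\alpha = \sum_i a_i$ is a single constant value on all of $\F_q^*$, and the condition that this one value be (up to sign) a square or zero comes nowhere near forcing $L(x)=x^{q^n}$; compare Theorem \ref{main2}, whose partial conclusion is exactly what a specialization inside $\F_q$ can buy. Moreover, even if you established $Im(L(x)/x) \subseteq \{\text{squares}\}\cup\{0\}$ as a function on $\F_q$, Theorem \ref{gmg} would only yield $L(x) \equiv a x^{p^d} \pmod{x^q-x}$, an identity of functions on $\F_q$, which does not determine the polynomial $L$ of degree $q^n$. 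The paper repairs both defects simultaneously by specializing at $t=-L(\alpha)/\alpha$ for $\alpha$ ranging over $\F_{q^k}$ with $k>n$: Dedekind, the fixed point $\alpha$, Lemma \ref{action_of_normalizer}, oddness of all factor degrees, Lemma \ref{compositum}, and Lemma \ref{discriminant_formula} together show that $\pm L(\alpha)/\alpha$ is a square in $\F_{q^k}$ for \emph{every} such $\alpha$; Theorem \ref{gmg} applied over $\F_{q^k}$ then gives $L(x)=ax^{p^d}$ as a function on $\F_{q^k}$, and the inequality $q^k > q^n = \deg L$ upgrades this to a polynomial identity. Incidentally, your proposed parity bookkeeping (showing a Frobenius-twist element with a fixed vector would be an odd permutation) is not needed in that form: once the fixed point forces the Dedekind element to have order dividing the odd prime $n$, all its cycle lengths are odd, so the decomposition group consists of even permutations and the discriminant is automatically a square in the residue field.
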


\begin{proof}
Let $\Gamma$ be the Galois group of $L(x)+tx$ over $\F_q(t)$.
As we stated in the introduction, $\Gamma$ is also the Galois group of $(L(x)+tx)/x$.
By Lemma \ref{n_cycle},
$\Gamma$ contains a $(q^n-1)$-cycle, i.e., a Singer cycle.
Denote the subgroup generated by the Singer cycle as $C$. 
Let $N(C)$ be the normalizer of $C$ in $GL(n,q)$.
By Theorem \ref{CK} and the comments after, 
 either  $\Gamma=GL(n,q)$, or $C \le \Gamma \le N(C)$.

If $L(x)=x^{q^n}$ then  it is easy to check that the Galois group of
$(L(x)+tx)/x=x^{q^n-1}+t$ is $\Gamma L (1,q^n)$,
 a semidirect product of  a cyclic  group of order $q^n-1$
with a cyclic group of order $n$, so $\Gamma = N(C)$.
Therefore, showing that $C \le \Gamma \le N(C)$ implies $L(x)=x^{q^n}$
will complete the proof of Theorem \ref{main}.
Let us assume from now on that $C \le \Gamma \le N(C)$. 
We will show that this implies $L(x)=x^{q^n}$.

Without loss of generality we may assume that the coefficient of $x$ in $L(x)$ is 0.

Choose an integer $k>n$ and consider the field $\F_{q^k}$.
For $\alpha \in \F_{q^k}$ we specialize $t$ to be $-L(\alpha)/\alpha$,
as described in Section \ref{dedekind}.
Call the resulting polynomial $L_\alpha(x)$, i.e., define
\[
L_\alpha(x):=L(x)-\frac{L(\alpha)}{\alpha} x.
\]
Then $L_\alpha(x)$ is a monic $q$-polynomial of $q$-degree $n$ in $\F_{q^k}[x]$.
Note that $L_\alpha(\alpha)=0$.
The roots of $L_\alpha(x)$ are distinct if $L(\alpha)\not=0$.
Since $k>n$ there do exist $\alpha \in \F_{q^k}$ with $L(\alpha)\not=0$,
and we choose any such $\alpha$.

Suppose that the monic irreducible factors of $L_\alpha(x)/x$ over
$\F_{q^k}[x]$ have degrees $d_1, \ldots , d_m$.
Since $L_\alpha(x)/x$ has at least one root in $\F_{q^k}$, namely $\alpha$,
at least one of the $d_i$ is equal to 1.
By  Theorem \ref{ded}, $\Gamma$ has an element $\sigma$ whose action on the roots
of $(L(x)+tx)/x$ consists of disjoint cycles of lengths $d_1, \ldots , d_m$.
Since one of the $d_i$ is 1, $\sigma$ fixes a point.
By Lemma  \ref{action_of_normalizer}, $\sigma$ has order dividing $n$
because we are assuming  that $\Gamma \le N(C)$. 
Since $n$ is odd,  the order of $\sigma$ is odd.
Because the order of $\sigma$ is the lowest common multiple of $d_1, \ldots , d_m$,
this implies that each $d_i$ is odd. 
By Lemma \ref{compositum}, we conclude that the Galois
group of $L_\alpha(x)/x$ over $\F_{q^k}$ has odd order,
since the splitting field is a compositum of  extensions of $\F_{q^k}$
of degrees $d_1, \ldots , d_m$.
This Galois group is cyclic, being the Galois group of a finite extension 
of a finite field.

We have now shown that the Galois
group of $L_\alpha(x)/x$ over $\F_{q^k}$ is cyclic of odd order. 
Because an odd length cycle is an even permutation,
 the Galois group of $L_\alpha(x)/x$ over $\F_{q^k}$
consists entirely of even permutations, in the permutation action on the roots.
By elementary Galois theory, 
the discriminant of $L_\alpha(x)/x$ is a square in $\F_{q^k}$.

Next we calculate the discriminant.
The constant term in $L_\alpha(x)/x$ is $L(\alpha)/\alpha$.
By Lemma \ref{discriminant_formula} the discriminant of 
$L_\alpha(x)/x$ is equal to $L(\alpha)/\alpha$ if $q\equiv 1\pmod{4}$,
and is equal to $-L(\alpha)/\alpha$ if $q\equiv 3\pmod{4}$, modulo squares.

First suppose  $q\equiv 1\pmod{4}$.
If $\alpha \in \F_{q^k}$ then either $L(\alpha)/\alpha =0$
or $L(\alpha)/\alpha$ is a square in $\F_{q^k}$.
We apply  Theorem \ref{gmg}
 to $L(x)/x$ as a function on $\F_{q^k}$,
when $L(x)$ is considered as a $p$-polynomial (any $q$-polynomial is a $p$-polynomial).
We conclude that $L(x)=ax^{p^d}$ as a function on $\F_{q^k}$,
for some $a\in \F_q^*$ which is a square, and some $0\le d < mk$, where $q=p^m$.
This implies that, as elements of the polynomial ring $\F_{q^k}[x]$, 
the polynomial $L(x)-ax^{p^d}$ is divisible by $x^{q^k}-x$.
However we chose $k>n$, so $q^k$ is greater than the degree of the polynomial $L(x)-ax^{p^d}$.
This implies that $L(x)-ax^{p^d}$ is the zero polynomial, and so
$L(x)=ax^{p^d}$ as polynomials. Since we assumed $L(x)$ is monic of $q$-degree $n$,
we must have $L(x)=x^{q^n}$.

If $q\equiv 3\pmod{4}$ and $\alpha \in \F_{q^k}$ then either $L(\alpha)/\alpha =0$
or $-L(\alpha)/\alpha$ is a square in $\F_{q^k}$.
By Theorem \ref{gmg}
applied to $-L(x)/x$ as a function on $\F_{q^k}$, we may conclude that $L(x)=ax^{p^d}$ 
as a function on $\F_{q^k}$.
The argument is completed in the same way as when $q\equiv 1\pmod{4}$.

This completes the proof of Theorem \ref{main}.
\end{proof}

Theorem \ref{main} determines the arithmetic monodromy group.
We remark that the geometric monodromy group is also $GL(n,q)$, 
except for the case $L(x)=x^{q^n}$ when the geometric monodromy group is
cyclic of order $q^n-1$.

\section{Characteristic 2}\label{char2}

We have no reason to believe that Theorem \ref{main} does not hold when $q$ is a power of 2, but one of our main tools
used in the odd characteristic proof, Theorem \ref{gmg}, does not apply. We briefly show here how our earlier methods can be applied
in characteristic 2, yielding a strong partial result.

 \setcounter{thm}{10}

\begin{thm}\label{main2}
Let $q$ be a power of $2$, and let $n$ be an odd prime. 
Let $L(x)\in \F_q[x]$ be a monic $q$-polynomial of $q$-degree $n$ with
\[
L=a_1x^q+a_2x^{q^2}+\cdots+a_{n-1} x^{q^{n-1}}+x^{q^n}.
\]
Suppose that
\[
a_1+\cdots +a_{n-1}+1\neq 0.
\]
Then the Galois group of $L(x)+tx \in \F_q(t)[x]$ is isomorphic to $GL(n,q)$, unless $L(x)=x^{q^n}$,
in which case the Galois group is isomorphic to $\Gamma L (1,q^n)$.
\end{thm}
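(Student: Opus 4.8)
The plan is to mimic the proof of Theorem \ref{main} as closely as possible, replacing the one step that relies on Theorem \ref{gmg} — which is unavailable in characteristic $2$ — by an elementary argument exploiting the extra hypothesis $a_1+\cdots+a_{n-1}+1\neq 0$. First I would argue exactly as before: $\Gamma$ contains a Singer cycle $C$ by Lemma \ref{n_cycle}, so by Kantor's Theorem \ref{CK} and the primality of $n$ we have either $\Gamma=GL(n,q)$ or $C\le \Gamma\le N(C)$. If $L(x)=x^{q^n}$ then $(L(x)+tx)/x=x^{q^n-1}+t$ has Galois group $\Gamma L(1,q^n)=N(C)$, so it suffices to show that $C\le\Gamma\le N(C)$ forces $L(x)=x^{q^n}$. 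So I would assume $C\le\Gamma\le N(C)$ and aim for a contradiction (under the running hypothesis, $L(x)\neq x^{q^n}$ means some $a_i\neq 0$ with $1\le i\le n-1$).

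Next, as in the main proof, for each $\alpha$ in a large extension $\F_{q^k}$ with $k>n$ and $L(\alpha)\neq 0$, I would specialize $t\mapsto -L(\alpha)/\alpha$ via Dedekind's Theorem \ref{ded}. The specialized polynomial $L_\alpha(x)=L(x)-\tfrac{L(\alpha)}{\alpha}x$ has $\alpha$ as a root of $L_\alpha(x)/x$, so at least one irreducible factor has degree $1$; hence $\Gamma$ contains an element $\sigma$ fixing a point, and by Lemma \ref{action_of_normalizer} (using $\Gamma\le N(C)$) the order of $\sigma$ divides $n$, hence is odd, hence every factor degree $d_i$ of $L_\alpha(x)/x$ over $\F_{q^k}$ is odd. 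By Lemma \ref{compositum} the Galois group of $L_\alpha(x)/x$ over $\F_{q^k}$ is cyclic of odd order, so it consists of even permutations on the $q^k$-many — rather, $q^n-1$-many — roots, and therefore $\emph{the discriminant of }L_\alpha(x)/x\emph{ is a square in }\F_{q^k}$. This is where characteristic $2$ bites: the discriminant is \emph{always} a square in a field of characteristic $2$ (it is the square of the Vandermonde), so this gives no information. The characteristic-$2$ substitute is the Berlekamp discriminant / Arf invariant: a separable polynomial $g$ over a field $K$ of characteristic $2$ has Galois group inside the alternating group if and only if a certain element $\mathrm{Berl}(g)\in K$ lies in the image of the Artin–Schreier map $\wp(y)=y^2+y$ on $K$. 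I would use this criterion in place of the square-discriminant criterion.

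So the key step I would carry out is a computation of the Berlekamp discriminant of $L_\alpha(x)/x$ and an analysis of when it lies in $\wp(\F_{q^k})$. The hoped-for outcome, parallel to Lemma \ref{discriminant_formula}, is a clean closed form depending on $\alpha$ only through $L(\alpha)/\alpha$ and the coefficients $a_i$; then the conclusion "$\mathrm{Berl}(L_\alpha(x)/x)\in\wp(\F_{q^k})$ for all $\alpha\in\F_{q^k}$" should translate, via a counting or linear-algebra argument over the big field, into an algebraic constraint on $(a_1,\dots,a_{n-1},1)$ that is incompatible with $a_1+\cdots+a_{n-1}+1\neq 0$ unless all $a_i=0$, i.e.\ $L(x)=x^{q^n}$. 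This last implication — extracting a usable polynomial identity in the $a_i$ from the Artin–Schreier condition holding for every specialization — is the main obstacle, and it is the reason the theorem is only a partial result: without the full strength of Theorem \ref{gmg}, I expect the Berlekamp-discriminant computation to control only the ``trace-like'' linear combination $a_1+\cdots+a_{n-1}+1$ of the coefficients (coming from evaluating $L(x)/x$ at $x=1$, or from the $x^1$-coefficient structure), which is exactly the quantity the hypothesis assumes nonzero; ruling out the vanishing of that single expression is what the argument can deliver, whereas forcing \emph{all} the $a_i$ to vanish would need the finer information that Theorem \ref{gmg} supplied in odd characteristic.
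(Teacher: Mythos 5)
Your proposal correctly sets up the reduction (Singer cycle via Lemma \ref{n_cycle}, Kantor's Theorem \ref{CK} with $n$ prime, reduction to the case $C\le\Gamma\le N(C)$, specialization via Dedekind, and the conclusion that every factor degree $d_i$ divides $n$), but it then stalls at the step that actually matters. The Berlekamp/Arf-invariant computation you propose as a substitute for the discriminant is never carried out: you describe it as a ``hoped-for outcome'' and concede that extracting a usable identity in the $a_i$ from the Artin--Schreier condition is ``the main obstacle.'' As written, that is a genuine gap, not a proof --- the entire content of the theorem beyond the generic setup lives in that uncompleted step, and there is no evidence given that the Arf invariant of $L_\alpha(x)/x$ has the clean closed form you are counting on, nor that the resulting condition would isolate precisely the quantity $a_1+\cdots+a_{n-1}+1$.

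You have also misread where the hypothesis $a_1+\cdots+a_{n-1}+1\neq 0$ enters: it is not the \emph{output} of a discriminant-type criterion but the \emph{input} that lets one choose the specialization point inside the base field. The paper's proof needs no characteristic-$2$ analogue of the discriminant at all. Take any nonzero $\alpha\in\F_q$ (not in a large extension $\F_{q^k}$); since $\alpha^{q^i}=\alpha$, one gets $L(\alpha)=\alpha(a_1+\cdots+a_{n-1}+1)\neq 0$ by hypothesis, so $L_\alpha(x)=L(x)-\tfrac{L(\alpha)}{\alpha}x$ is separable of degree $q^n$ and the specialization occurs at a degree-one place of $\F_q[t]$. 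The argument you already reproduced shows every monic irreducible factor of $L_\alpha$ in $\F_q[x]$ has degree dividing $n$, hence divides $x^{q^n}-x$; since $L_\alpha$ is squarefree, monic, and of degree $q^n$, this forces $L_\alpha(x)=x^{q^n}-x$, and since $L$ has no $x$-term this gives $L(x)=x^{q^n}$. That elementary degree count is the missing endgame, and it is why the paper remarks that this argument is characteristic-free.
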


\begin{proof}
%We assume that $L(x)$ is not $x^{q^n}$.
Let $\alpha$ be any nonzero element of $\F_q$. Then we find that
\[
L(\alpha)=\alpha(a_1+\cdots +a_{n-1}+1)\neq 0.
\]
We set
\[
L_\alpha(x):=L(x)-\frac{L(\alpha)}{\alpha} x
\]
and note as before that $\alpha$ is a root of $L_\alpha$.

Following the proof of Theorem \ref{main}, 
 it suffices to assume that the Galois group of $L(x)+tx$ is contained
in the normalizer of a Singer cycle, and prove that $L(x)=x^{q^n}$. 
The roots of $L_\alpha(x)$ in its splitting field are all distinct,
and as shown in the proof of Theorem \ref{main}, it follows that
all monic irreducible factors of $L_\alpha(x)$ in $\F_q [x]$ have degree dividing $n$.

It is well known that the polynomial $x^{q^n}-x$ is the product of all monic irreducible polynomials of degree dividing $n$ over $\F_q$.
Since $L_\alpha(x)$ has the same degree as $x^{q^n}-x$, we must have
$L_\alpha(x)=x^{q^n}-x$.
But this forces $L(x)=x^{q^n}$.
\end{proof}

The proof above works for all $q$, not just powers of 2. It provides a strong partial result in all 
characteristics. Other specializations provide even stronger partial results.

{\bf Acknowledgement}

The authors thank Marco Timpanella for helpful conversations.

\end{document}